\newtheorem{thm}{Theorem}[section]
\newtheorem{prop}[thm]{Proposition}
\newtheorem{lem}[thm]{Lemma}
\newtheorem{definition}[thm]{Definition}
\numberwithin{equation}{section}
\renewcommand{\Re}{\operatorname{Re}}
\newcommand{\rg}{\operatorname{rg}}
\newcommand{\R}{\mathbb{R}}
\newcommand{\C}{\mathbb{C}}
\newcommand{\Hb}{\overline{\mathbb{H}}}
\title[Stability of self-similar solutions]{On the stability of self-similar solutions to nonlinear wave equations}
\author{Ovidiu Costin}
\address{Department of Mathematics, The Ohio State University, 231 W 18th Ave, Columbus, OH, 43220, USA}
\email{costin@math.ohio-state.edu}
\author{Roland Donninger}
\address{Rheinische Friedrich-Wilhelms-Universit\"at Bonn,
Mathematisches Institut, Endenicher Allee 60, D-53115 Bonn, Germany}
\email{donninge@math.uni-bonn.de}
\thanks{Roland Donninger is supported by a Sofja Kovalevskaja Award granted by 
the Alexander von Humboldt Foundation 
and the German Federal Ministry of Education and Research.\\Ovidiu Costin is partially supported by the NSF DMS Grant 1108794.}
\author{Irfan Glogi\'c}
\address{Department of Mathematics, The Ohio State University, 231 W 18th Ave, Columbus, OH, 43220, USA}
\email{glogic.1@osu.edu}
\author{Min Huang}
\address{Department of Mathematics, City University of Hong Kong, Tat Chee Avenue, Kowloon, Hong Kong.} 
\email{ mihuang@cityu.edu.hk}
\begin{document}
\maketitle

\begin{abstract}
We consider an explicit self-similar solution to an energy-supercritical
Yang-Mills equation and prove its mode stability.
Based on earlier work by one of the authors, we obtain a fully rigorous proof
of the \emph{nonlinear} stability of the self-similar blowup profile.
This is a large-data result for a supercritical wave equation.
Our method is broadly applicable and
provides a general approach to stability problems related to self-similar solutions
of nonlinear wave equations.
\end{abstract}

\section{Introduction}

\noindent 
The development of singularities in finite time is one of the most stunning
features of nonlinear evolution equations.
Singularity formation (or ``blowup'') of the solution signifies a dramatic change in
the behavior of the underlying model or even the complete breakdown of the mathematical
description.
On the level of a fundamental physical theory, blowup occurs in Einstein's
equation of general relativity to indicate the dynamical formation of a black hole.
However, a rigorous treatment of Einstein's equation in this context is hopeless at the
present stage of research.
Consequently, it is a reasonable strategy to resort to simpler toy models
that capture some of the features of the more complicated system.
Natural candidates in this respect are energy-supercritical nonlinear wave equations with
a geometric origin such as wave maps or Yang-Mills models.

The easiest way to demonstrate finite-time blowup in a given evolution equation
is to construct self-similar solutions.
In exceptional cases it is even possible to obtain closed-form expressions.
The relevance of such solutions depends on their stability. After all, one would
like to obtain information on the \emph{generic} behavior of the system.
However, already at the linear level the stability analysis 
of self-similar solutions to nonlinear wave equations is very challenging since one
is confronted with highly nonself-adjoint spectral problems.
Consequently, standard methods do not apply.
This fact poses a serious obstacle to any rigorous analysis of the blowup
dynamics.

In the present paper we develop a general approach which is capable of handling the difficult
nonself-adjoint spectral problems related to self-similar blowup.
For the sake of simplicity, however, we focus on the concrete example of an energy-supercritical
Yang-Mills equation that displays blowup via an explicitly known
self-similar solution.

\subsection{An energy-supercritical Yang-Mills model}
For $\mu\in \{0,1,2,\dots,5\}$ let 
$A_\mu: \R^{1,5}\to \mathfrak{so}(5)$ be a collection of five fields on ($1+5$)-dimensional
Minkowski space with values in the matrix Lie algebra of $\mathrm{SO}(5)$.
In other words, for fixed $\mu$ and $(t,x) \in \R^{1,5}$,
$A_\mu(t,x)$ is a skew-symmetric real $(5\times 5)$-matrix.
One sets
\[ F_{\mu \nu}:=\partial_\mu A_\nu-\partial_\nu A_\mu+[A_\mu,A_\nu] \]
and considers the action functional\footnote{Einstein's
summation convention is in force. Greek indices take the values $0$ to $5$ whereas
latin indices run from $1$ to $5$. Our convention for the Minkowski metric
is $\eta=\mathrm{diag}(-1,1,1,1,1,1)$.}
\begin{equation}
\label{eq:action}
 \int_{\R^{1,5}}\mathrm{tr}(F_{\mu\nu}F^{\mu\nu}). 
 \end{equation}
Formally, this is reminiscent of Maxwell's theory. However, the commutator in the definition
of $F_{\mu\nu}$ introduces a very natural nonlinearity.
In this sense, Yang-Mills theory can be viewed as a nonlinear
generalization of electrodynamics.
The Euler-Lagrange equations associated to the action \eqref{eq:action} are
\[ \partial_\mu F^{\mu\nu}+[A_\mu, F^{\mu\nu}] =0 \]
and the ansatz \cite{D82, B02}
\[ A_\mu^{jk}(t,x)=(\delta^k_\mu x^j-\delta^j_{\mu}x^k)\frac{\psi(t,|x|)}{|x|^2} \]
yields the scalar nonlinear wave equation
\begin{equation}
\label{eq:YM} \psi_{tt}- \psi_{rr}
-\frac{2}{r}\psi_r+\frac{3\psi(\psi+1)(\psi+2)}{r^2}=0, 
\end{equation}
$\psi=\psi(t,r)$, for the auxiliary function $\psi: \R \times [0,\infty) \to \R$.
Eq.~\eqref{eq:YM} has been proposed as a model for singularity formation
in Einstein's equation \cite{BizTab01, B02, BC05, GM07}.
In general, (classical) Yang-Mills fields attracted a lot of interest by both the physics and
mathematics communities, see e.g.~\cite{A79, EM82a, EM82b, KM95, KT99, BOS04, T05,
KS05, S07,CKM08, KST09, RR09, S10}. 

Eq.~\eqref{eq:YM} is energy-supercritical \cite{BizTab01} and
large-data solutions can develop singularities in finite time
as is evidenced by the existence of self-similar solutions
of the form $\psi(t,r)=f(\frac{r}{1-t})$, see \cite{CST98}.
Bizo\'n \cite{B02} found an explicit example of this kind given by
\[ \psi_0(t,r)=f_0(\tfrac{r}{1-t}),\qquad f_0(\rho)=-\frac{8\rho^2}{5+3\rho^2}. \]
Numerical investigations \cite{BizTab01, B02, BC05} yield strong evidence that the solution $\psi_0$
gives rise to a stable self-similar blowup mechanism.
Motivated by this, the second author \cite{Don14} developed a complete nonlinear stability theory for the
solution $\psi_0$, see also \cite{D11, DSA11, DonSch12, DonSch14} for other types of
nonlinear wave equations.
However, the results in \cite{Don14} are conditional in the sense that they 
depend on a spectral assumption which could not be verified
rigorously so far. It is the aim of the present paper to close this gap.

\subsection{The mode stability problem}
The first important step in a stability analysis of the solution $\psi_0$ is to rule
out unstable modes. To this end, one introduces \emph{similarity coordinates} \cite{BC05}
\[ \tau=-\log(1-t),\qquad \rho=\frac{r}{1-t}. \]
Eq.~\eqref{eq:YM} transforms into
\begin{equation}
\label{eq:phi} \phi_{\tau\tau}+\phi_\tau+2\rho \phi_{\tau\rho}-(1-\rho^2)(\phi_{\rho\rho}
+\tfrac{2}{\rho}\phi_\rho)+\frac{3\phi(\phi+1)(\phi+2)}{\rho^2}=0 
\end{equation}
where $\phi(\tau,\rho)=\psi(1-e^{-\tau},e^{-\tau}\rho)$.
Due to finite speed of propagation one is mainly interested in the behavior inside
the backward lightcone
of the singularity, which corresponds to the coordinate domain $\tau\geq 0$, $\rho \in [0,1]$.
Note that the self-similar solution is independent of $\tau$ and simply given by
$f_0(\rho)$.
Next, one inserts the \emph{mode ansatz} 
\[ \phi(\tau,\rho)=f_0(\rho)+e^{\lambda\tau}u_\lambda(\rho),\qquad \lambda\in \C \]
and linearizes in $u_\lambda$.
This yields the ODE spectral problem
\begin{equation}
\label{eq:spec} -(1-\rho^2)(u_\lambda''+\tfrac{2}{\rho}u_\lambda')
+2\lambda \rho u_\lambda'+\lambda(\lambda+1)u_\lambda+\frac{V(\rho)}{\rho^2}u_\lambda=0 
\end{equation}
for the function $u_\lambda$, where the potential $V$ is given by
\[ V(\rho)=6+18f_0(\rho)+9f_0(\rho)^2=6\frac{25-90\rho^2+33\rho^4}{(5+3\rho^2)^2}. \]
Observe that Eq.~\eqref{eq:spec} has a singular point at the lightcone $\rho=1$ which is a consequence
of the fact that lightcones are the characteristic surfaces of Eq.~\eqref{eq:YM}.

Admissible solutions of Eq.~\eqref{eq:spec} with 
$\Re\lambda \geq 0$ lead to instabilities of
$f_0$ at the linear level.
However, it is not entirely trivial to determine what ``admissible'' in this context means.
This question can in fact only be answered once one has a suitable well-posedness theory
for Eq.~\eqref{eq:phi}.
The necessary framework is developed in \cite{Don14}
 and it turns out that if $\Re\lambda\geq 0$,
only smooth solutions are admissible.
Consequently, a nonzero solution $u_\lambda\in C^\infty[0,1]$ of Eq.~\eqref{eq:spec}
with $\Re\lambda\geq 0$ is called an \emph{unstable mode}.
The corresponding $\lambda$ is called an \emph{(unstable) eigenvalue}.
As a matter of fact, there exists an unstable mode. The function
\[ u_1(\rho):=-\rho f_0'(\rho)=\frac{80\rho^2}{(5+3\rho^2)^2} \]
turns out to be a smooth solution of Eq.~\eqref{eq:spec} with $\lambda=1$, as one easily
checks.
However, this mode is not a ``real'' instability of the solution $f_0$ but rather 
a consequence
of the time translation symmetry of Eq.~\eqref{eq:YM}.
Indeed, the profile $f_0$ defines in fact a one-parameter family of blowup solutions
given by
\[ \psi^T(t,r)=f_0(\tfrac{r}{T-t}) \]
where $T>0$ is a free parameter.
By the chain rule it follows that
\[ \partial_T \psi^T(t,r)|_{T=1}=-\tfrac{r}{(1-t)^2}f_0'(\tfrac{r}{1-t})
=-e^{\tau}\rho f_0'(\rho) \]
solves the linearized equation.
These observations lead to the following definition.

\begin{definition}
The solution $\psi_0$ (or $f_0$) is said to be \emph{mode stable} if 
$u_1$ is the only unstable mode.
\end{definition}

\subsection{The main result}
With these preparations at hand we can formulate our main result.

\begin{thm}
\label{thm:main}
The self-similar solution $\psi_0$ is mode stable.
\end{thm}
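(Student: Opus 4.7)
My plan reduces mode stability to a non-vanishing statement for a single entire function of $\lambda$ and then localizes its zeros. A Frobenius analysis of \eqref{eq:spec} at $\rho=0$ gives indicial exponents $2$ and $-3$, so smoothness forces any unstable mode to be a scalar multiple of the Frobenius solution $\phi_0(\cdot;\lambda)$ with leading behaviour $\rho^2$. At $\rho=1$ the exponents are $0$ and $1-\lambda$; for $\Re\lambda\ge 0$ with $\lambda\notin\{1,2,\dots\}$ only the analytic branch $\phi_1(\cdot;\lambda)$ is admissible, and the resonant integer values, where a logarithm may appear, can be handled by an appropriate limit in $\lambda$. Both $\phi_0$ and $\phi_1$ extend analytically in $\rho$ across $(0,1)$ and depend analytically on $\lambda\in\C$ (after removing the isolated poles that the Taylor recurrence at $\rho=1$ introduces), so the Wronskian $W(\lambda):=\mathrm{Wr}(\phi_0,\phi_1)(\tfrac{1}{2};\lambda)$ is, up to a harmless explicit factor, entire in $\lambda$, and its zeros in $\{\Re\lambda\ge 0\}$ are exactly the unstable eigenvalues. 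The symmetry-induced mode $u_1$ yields $W(1)=0$, so the theorem reduces to showing that $\lambda=1$ is the only zero of $W$ in the closed right half-plane.

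Next I would perform a large-$\lambda$ asymptotic analysis with $\lambda$ in the role of the WKB parameter. The eikonal equation $(1-\rho^2)\Phi'(\rho)^2-2\rho\Phi'(\rho)-1=0$ admits the two closed-form solutions $\Phi_\pm(\rho)=-\log(1\mp\rho)$, so the ansatz $u_\lambda\sim e^{\lambda\Phi(\rho)}\psi(\rho)$ singles out two asymptotic branches with exponential prefactors $(1\mp\rho)^{-\lambda}$. A transport equation at next order fixes $\psi$, and the remainder is controlled via a Volterra integral formulation. Matching these representations with the Frobenius normalizations of $\phi_0$ and $\phi_1$ at an interior point produces an asymptotic formula of the form $W(\lambda)=e^{A\lambda}(c+o(1))$ with $c\neq 0$, valid uniformly for $\arg\lambda\in[-\pi/2,\pi/2]$ as $|\lambda|\to\infty$. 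This confines the zeros of $W$ in $\{\Re\lambda\ge 0\}$ to a compact set $K\subset\C$.

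On $K$ I would exploit the convergent expansions $\phi_0(\rho;\lambda)=\rho^2\sum_{n\ge 0}a_n(\lambda)\rho^n$ for $|\rho|<1$ and $\phi_1(\rho;\lambda)=\sum_{n\ge 0}b_n(\lambda)(1-\rho)^n$ for $|1-\rho|<1$, where $a_n$ and $b_n$ are rational functions of $\lambda$ obeying the short recurrences read off directly from \eqref{eq:spec}. Substituting into the Wronskian at an interior point and dividing out the known zero at $\lambda=1$ yields an entire function $\widetilde W(\lambda)$ that must be shown non-vanishing on $K$. This is where the main obstacle lies: the cancellations producing $W(1)=0$ preclude a term-by-term positivity argument and force one to control simultaneously the tail of the series (via analytic upper bounds on $|a_n(\lambda)|,|b_n(\lambda)|$ derived from the recurrences by generating-function or iterated-norm methods) and the finite truncation (via a quantitative lower bound verified on a sufficiently dense grid in $K$, bridged by Lipschitz estimates on $\widetilde W$). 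The decisive technical step is balancing these two ingredients: tail bounds tight enough to reduce the problem to a finite verification, and a finite verification sharp enough to conclude.
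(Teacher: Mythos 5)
Your strategy is genuinely different from the paper's: you set up a connection (Wronskian) problem for the two Frobenius solutions, push the zeros into a compact set by large-$\lambda$ WKB asymptotics, and then propose to finish by a quantitative verification on that compact set. This is essentially the architecture of the earlier wave-maps result \cite{CosDonXia14}, which the present paper deliberately abandons. The paper instead (i) removes the symmetry eigenvalue $\lambda=1$ by a supersymmetric factorization built from the explicit mode $u_1$, so that no division of an entire function by $(\lambda-1)$ and no resonance bookkeeping is needed, and (ii) reads off a three-term recurrence for the Taylor coefficients at $\rho=0$, applies Poincar\'e's theorem on ratios $b_{n+1}/b_n$, and shows via an explicit quasi-solution $\tilde r_n$, Phragm\'en--Lindel\"of, and polynomial positivity on the imaginary axis that the ratio tends to $1$ (radius of convergence exactly $1$) for \emph{every} $\lambda\in\Hb$ simultaneously. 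No asymptotic regime in $\lambda$ and no compactness reduction is required.

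As written, your proposal has two genuine gaps, and they are exactly the two steps you would need to make the architecture work. First, the claim $W(\lambda)=e^{A\lambda}(c+o(1))$ with $c\neq0$, \emph{uniformly} for $\arg\lambda\in[-\pi/2,\pi/2]$, is not justified: since $\rho=0$ is a regular singular point (the terms $\tfrac{2}{\rho}u'$ and $\tfrac{V}{\rho^2}u$), the WKB representation breaks down there, and normalizing $\phi_0$ by its $\rho^2$ behaviour forces you to solve a central connection problem through a Bessel-type boundary layer. The constant $c$ is, up to normalization, the connection coefficient of $\phi_0$ onto the $(1-\rho)^{-\lambda}$ branch, and you give no argument that it is bounded away from zero --- precisely on the boundary rays $\Re\lambda=0$, where the two WKB exponentials have equal modulus and oscillatory cancellation must be excluded, this is the delicate point of the whole method. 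Second, the non-vanishing of $\widetilde W$ on the compact set $K$ is not proved: you yourself identify it as ``the main obstacle,'' but no tail bounds on $a_n(\lambda)$, $b_n(\lambda)$ are derived, no Lipschitz constant is produced, and no finite verification is carried out; without these the argument does not close. (The handling of the resonant values $\lambda\in\{1,2,\dots\}$ at $\rho=1$ ``by an appropriate limit'' is likewise asserted rather than shown.) Both difficulties are avoidable: the paper's SUSY reduction disposes of $\lambda=1$ structurally, and its quasi-solution estimate $|\delta_n|\le\tfrac14$ on all of $\Hb$ replaces both your asymptotic regime and your compact-set verification with a single induction backed by checkable polynomial inequalities.
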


The first result of this kind was proved very recently for a similar problem
related to the wave maps equation \cite{CosDonXia14}.
However, the method we develop here is different and much more effective.
As a consequence, the main argument fits on a few pages
and the method easily generalizes to other types of nonlinear wave equations.
In view of the fact that rigorous research on self-similar blowup in supercritical
wave equations was blocked for a long
time by the difficulties related to these spectral problems, we hope that our method will
trigger new developments in the field.
In this respect we also remark that
Theorem \ref{thm:main} in conjunction with the theory developed in \cite{Don14} 
yields a fully rigorous proof of stable self-similar
blowup dynamics for the Yang-Mills equation \eqref{eq:YM}.
The precise statement is given in \cite{Don14}, Theorem 1.3.
We emphasize that this is a large-data result for an energy-supercritical 
wave equation.

\section{Removal of the symmetry mode}
\noindent Although the eigenvalue $\lambda=1$ is not connected to a real instability of the 
solution $\psi_0$,
it is still inconvenient for the further analysis. 
Consequently, it is desirable to ``remove'' it.
This can be done by a suitable adaptation of a well-known procedure from supersymmetric
quantum mechanics which we recall here briefly.

\subsection{Interlude on SUSY quantum mechanics}
Consider the Schr\"odinger operator $H=-\partial_x^2+V$ on $L^2(\R)$
with some nice potential $V$
and suppose there exists a ground state $f_0 \in L^2(\R)\cap C^\infty(\R)$, i.e., $f_0''=Vf_0$.
Assume further that $f_0$ has no zeros.
Then one has the factorization
\[ -\partial_x^2 + V=\left (-\partial_x-\frac{f_0'}{f_0}\right )
\left (\partial_x-\frac{f_0'}{f_0} \right )=:Q^*Q. \]
By interchanging the order of this factorization, one defines the SUSY partner $\tilde H$
of $H$, i.e., $\tilde H:=QQ^*$. Explicitly, the SUSY partner is given by
\begin{align*} 
\tilde H&=\left (\partial_x-\frac{f_0'}{f_0}\right )\left (-\partial_x-\frac{f_0'}{f_0}\right )
=-\partial_x^2-V+2\frac{f_0'^2}{f_0^2}=:-\partial_x^2 + \tilde V
\end{align*}
where $\tilde V=-V+2\frac{f_0'^2}{f_0^2}$ is called the SUSY potential.
The point of all this is the following. Suppose $\lambda$ is an eigenvalue
of $H$, i.e., $Hf=Q^*Q f=\lambda f$ for some (nontrivial) $f$.
Applying $Q$ to this equation yields 
$QQ^* Q f=\lambda Q f$, i.e., $\tilde HQf=\lambda Qf$.
Thus, if $Qf\not=0$, i.e., if $f\notin \ker Q$, $\lambda$ is an eigenvalue of $\tilde H$
as well.
Obviously, we have $\ker Q=\langle f_0\rangle$ and thus, if $\lambda\not=0$ is an 
eigenvalue of $H$, then it is also an eigenvalue of $\tilde H$.
Moreover, $0$ is not an eigenvalue of $\tilde H$ for if this were the case,
we would have $QQ^* f=0$ for a nontrivial $f$, i.e., $f\in \ker Q^*$ or $Q^* f\in \ker Q$.
The former is impossible since $\ker Q^*=\langle \frac{1}{f_0}\rangle$ but $\frac{1}{f_0}
\notin L^2(\R)$.
The latter is impossible since $\rg Q^* \perp \ker Q$.
In summary, $\tilde H$ has the same set of eigenvalues as $H$ except for $0$.

\subsection{The supersymmetric problem}
Now we implement a version of this SUSY factorization trick for our problem.
Note that the Frobenius indices of Eq.~\eqref{eq:spec} at $\rho=0$
are $\{-3,2\}$ and at $\rho=1$ 
we have $\{0,1-\lambda\}$.
Suppose $u_\lambda$ is an unstable mode of Eq.~\eqref{eq:spec} and $\lambda\not=0$.
By definition, $u_\lambda\in C^\infty[0,1]$
and from Frobenius theory it follows that $|u_\lambda(\rho)|\simeq \rho^2$
as $\rho\to 0+$ as well as $|u_\lambda(\rho)|\simeq 1$ as $\rho\to 1-$.
We define a new function $v_\lambda$ by\footnote{Observe that this transformation
depends on $\lambda$. This is the reason why Eq.~\eqref{eq:spec} is not equivalent
to a standard self-adjoint Sturm-Liouville problem. 
What happens is the following.
Since $|u_\lambda(\rho)|\simeq 1$ as $\rho\to 1-$,
the corresponding $v_\lambda$ behaves like $|v_\lambda(\rho)|\simeq (1-\rho)^{\Re \lambda/2}$.
The Hilbert space in which the spectral problem for $v_\lambda$ 
is symmetric is $L_w^2(0,1)$
with the weight $w(\rho)=\frac{1}{(1-\rho^2)^2}$.
Thus, if $\Re\lambda\leq 1$, the admissible solution $v_\lambda$ 
does not belong to $L^2_w(0,1)$!
Consequently, for $\Re\lambda\leq 1$ 
the self-adjoint formulation does not yield any information.
This shows that the spectral problem \eqref{eq:spec} is truly nonself-adjoint in nature.
In particular, there can be nonreal eigenvalues.
For $\Re\lambda>1$, on the other hand, one can indeed use Sturm oscillation theory
to exclude eigenvalues.}
\[ u_\lambda(\rho)=\rho^{-1}(1-\rho^2)^{-\lambda/2}v_\lambda(\rho). \]
From Eq.~\eqref{eq:spec} it follows that $v_\lambda$ satisfies 
\begin{equation}
\label{eq:v} -v_\lambda''+\frac{V(\rho)}{\rho^2(1-\rho^2)}v_\lambda
=\frac{\lambda(2-\lambda)}{(1-\rho^2)^2}v_\lambda. 
\end{equation}
For $\lambda=1$ we have
\[ v_1(\rho)=\rho (1-\rho^2)^{\frac12}u_1(\rho)
=\frac{80\rho^3(1-\rho^2)^{\frac12}}
{(5+3\rho^2)^2}. \]
We rewrite Eq.~\eqref{eq:v} as
\[ -v_\lambda''+V_1 v_\lambda=\frac{\lambda(2-\lambda)-1}{(1-\rho^2)^2}v_\lambda \]
with
\[ V_1(\rho)=\frac{V(\rho)}{\rho^2(1-\rho^2)}-\frac{1}{(1-\rho^2)^2}. \]
Then we have $v_1''=V_1 v_1$ and thus, Eq.~\eqref{eq:v} may be factorized as
\[ (-\partial_\rho-\tfrac{v_1'}{v_1})(\partial_\rho-\tfrac{v_1'}{v_1})v_\lambda=
\frac{\lambda(2-\lambda)-1}{(1-\rho^2)^2}v_\lambda \]
or
\[ -(1-\rho^2)^2(\partial_\rho+\tfrac{v_1'}{v_1})
(\partial_\rho-\tfrac{v_1'}{v_1})v_\lambda=[\lambda(2-\lambda)-1]v_\lambda. \]
We set $\tilde v_\lambda=(\partial_\rho-\frac{v_1'}{v_1})v_\lambda$ and apply the operator
$\partial_\rho-\frac{v_1'}{v_1}$ 
to the equation which yields the supersymmetric problem
\begin{equation}
\label{eq:SUSY}
 -(\partial_\rho-\tfrac{v_1'}{v_1})[(1-\rho^2)^2(\partial_\rho+\tfrac{v_1'}{v_1})]\tilde v_\lambda
=[\lambda(2-\lambda)-1]\tilde v_\lambda. 
\end{equation}
Note the asymptotics
\begin{align*} 
\frac{v_1'}{v_1}(\rho)&=3\rho^{-1}+O(\rho)\qquad (\rho\to 0+) \\
\frac{v_1'}{v_1}(\rho)&\sim -\tfrac12 (1-\rho)^{-1}\qquad (\rho\to 1-).
\end{align*}
Consequently, from the representation $v_\lambda(\rho)=\rho^3 h_\lambda(\rho^2)$, where
$h_\lambda$ is analytic near $0$, 
we get $\tilde v_\lambda(\rho)=O(\rho^4)$ 
near $\rho=0$
and from $v_\lambda(\rho)\sim c(1-\rho)^{\lambda/2}$ we infer
$\tilde v_\lambda(\rho)\sim c(1-\rho)^{\lambda/2-1}$ near $\rho=1$ (unless $\lambda=1$).
Writing out Eq.~\eqref{eq:SUSY} explicitly yields
\begin{equation}
\label{eq:SUSYexp}
-(1-\rho^2)^2 \tilde v_\lambda''+4\rho(1-\rho^2)\tilde v_\lambda'
+\frac{(1-\rho^2)\tilde V(\rho)}{\rho^2}
\tilde v_\lambda=\lambda(2-\lambda)\tilde v_\lambda
\end{equation}
with the supersymmetric potential
\[ \tilde V(\rho)=20\frac{15-2\rho^2+3\rho^4}{(5+3\rho^2)^2}. \]
Setting $\tilde u_\lambda (\rho)=\rho^{-1}(1-\rho^2)^{1-\lambda/2}\tilde v_\lambda(\rho)$ we find
the equation
\begin{equation}
\label{eq:SUSYu}
-(1-\rho^2)(\tilde u_\lambda''+\tfrac{2}{\rho}\tilde u_\lambda')+2\lambda\rho \tilde u_\lambda'
+(\lambda^2+\lambda-2)\tilde u_\lambda+\frac{\tilde V(\rho)}{\rho^2}\tilde u_\lambda=0.
\end{equation}
Note that the Frobenius indices of Eq.~\eqref{eq:SUSYu} are $\{-4,3\}$ at $0$
and $\{0,1-\lambda\}$ at $\rho=1$.
With minor modifications the same procedure can be performed in the case $\lambda=0$.
As before, we say that $\lambda\in \C$ is an unstable eigenvalue of Eq.~\eqref{eq:SUSYu}
if $\Re\lambda\geq 0$ and there exists a nontrivial solution 
$\tilde u_\lambda \in C^\infty[0,1]$ of Eq.~\eqref{eq:SUSYu}.
In summary, we have proved the following result.

\begin{prop}
\label{prop:SUSY}
Let $\lambda\not=1$ be an unstable eigenvalue of Eq.~\eqref{eq:spec}.
Then $\lambda$ is an unstable
eigenvalue of Eq.~\eqref{eq:SUSYu}.
\end{prop}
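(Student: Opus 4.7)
The plan is to verify that the SUSY construction described in the preceding paragraphs does indeed send a smooth unstable mode of \eqref{eq:spec} with $\lambda\neq 1$ to a smooth nontrivial solution of \eqref{eq:SUSYu}. The two things that actually need argument are smoothness of $\tilde u_\lambda$ on the closed interval $[0,1]$ and nontriviality of $\tilde u_\lambda$; everything else is algebraic.

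\textbf{Step 1 (assembly).} Given an unstable mode $u_\lambda\in C^\infty[0,1]$ of \eqref{eq:spec} with $\lambda\neq 1$, define $v_\lambda$ by $u_\lambda(\rho)=\rho^{-1}(1-\rho^2)^{-\lambda/2}v_\lambda(\rho)$ on $(0,1)$. A direct calculation shows $v_\lambda$ solves \eqref{eq:v}, equivalently $Q^*Qv_\lambda=[\lambda(2-\lambda)-1](1-\rho^2)^{-2}v_\lambda$, where $Q=\partial_\rho-v_1'/v_1$. Set $\tilde v_\lambda:=Qv_\lambda$ and $\tilde u_\lambda(\rho):=\rho^{-1}(1-\rho^2)^{1-\lambda/2}\tilde v_\lambda(\rho)$; by the SUSY intertwining these solve \eqref{eq:SUSY} and \eqref{eq:SUSYu} respectively on $(0,1)$.

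\textbf{Step 2 (smoothness).} The cleanest way to see $\tilde u_\lambda\in C^\infty[0,1]$ is to unfold the definitions and check that the $(1-\rho^2)^{\pm\lambda/2}$ factors cancel, leaving a first-order differential expression in $u_\lambda$ with smooth coefficients away from $\rho=0$. A brief computation yields
\[
\tilde u_\lambda(\rho)=(1-\rho^2)u_\lambda'(\rho)-2\rho^{-1}u_\lambda(\rho)+(3-\lambda)\rho\,u_\lambda(\rho)+\tfrac{12\rho(1-\rho^2)}{5+3\rho^2}u_\lambda(\rho).
\]
The only potential singularity is the $\rho^{-1}$ factor at the origin, but the Frobenius analysis of \eqref{eq:spec} at $\rho=0$ with indices $\{-3,2\}$ forces $u_\lambda(\rho)=\rho^2 h_\lambda(\rho^2)$ for some analytic $h_\lambda$; hence $\rho^{-1}u_\lambda$ and every other term on the right are smooth on $[0,1]$. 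The leading behaviour at $\rho=0$ works out to $O(\rho^3)$, matching the admissible Frobenius index $+3$ of \eqref{eq:SUSYu}; at $\rho=1$ smoothness is immediate from the closed form above, and no logarithmic terms can arise since $\tilde u_\lambda$ is already exhibited as a finite linear combination of smooth functions.

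\textbf{Step 3 (nontriviality).} If $\tilde u_\lambda\equiv 0$ then $\tilde v_\lambda\equiv 0$, so $v_\lambda\in \ker Q=\langle v_1\rangle$, meaning $v_\lambda=c\,v_1$ for some $c\in\C$. Since $v_\lambda$ satisfies \eqref{eq:v} with spectral parameter $\lambda(2-\lambda)$ while $v_1$ satisfies the same equation with spectral parameter $1$, substitution forces $c[\lambda(2-\lambda)-1]v_1\equiv 0$. Because $v_1\not\equiv 0$ and $\lambda\neq 1$ gives $\lambda(2-\lambda)\neq 1$, we conclude $c=0$, whence $v_\lambda\equiv 0$, contradicting $u_\lambda\not\equiv 0$. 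Combined with Step~2, this shows $\tilde u_\lambda$ is an unstable eigenmode of \eqref{eq:SUSYu} with the same $\lambda$.

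The main obstacle I anticipate is the bookkeeping in Step~2: one must check that, in the Frobenius sense, the algebraic cancellation really does prevent the transformation from producing either a $(1-\rho)^{1-\lambda}$ branch at $\rho=1$ or a $\rho^{-4}$ contribution at $\rho=0$. The explicit formula above makes both cancellations manifest, which is why I prefer to write $\tilde u_\lambda$ directly in terms of $u_\lambda$ rather than tracking the intermediate $v$-level asymptotics. The edge case $\lambda=0$ requires only the obvious modification that the transformation $u_0\mapsto v_0=\rho u_0$ is nonsingular, and the same closed-form expression for $\tilde u_0$ applies verbatim.
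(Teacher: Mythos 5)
Your proof is correct and follows the same overall route as the paper (the SUSY factorization $Q=\partial_\rho-v_1'/v_1$ applied at the level of $v_\lambda=\rho(1-\rho^2)^{\lambda/2}u_\lambda$), but your verification of the two nontrivial points is organized differently and is in some respects cleaner. The paper establishes regularity by tracking Frobenius asymptotics of the intermediate quantities: $v_\lambda=\rho^3h_\lambda(\rho^2)$ near $0$ and $v_\lambda\sim c(1-\rho)^{\lambda/2}$ near $1$, combined with the expansions of $v_1'/v_1$ at both endpoints, to conclude $\tilde v_\lambda=O(\rho^4)$ and $\tilde v_\lambda\sim c(1-\rho)^{\lambda/2-1}$, hence that $\tilde u_\lambda$ sits on the analytic branch at both singular points. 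You instead collapse the composite map $u_\lambda\mapsto\tilde u_\lambda$ into a single first-order differential expression with coefficients smooth on $(0,1]$ (your formula checks out: it equals $(1-\rho^2)u_\lambda'-\tfrac{2(1-\rho^2)}{\rho}u_\lambda+(1-\lambda)\rho u_\lambda+\tfrac{12\rho(1-\rho^2)}{5+3\rho^2}u_\lambda$, which matches what you wrote after combining terms), so that the cancellation of the $(1-\rho^2)^{\pm\lambda/2}$ branches at $\rho=1$ is manifest and only the $\rho^{-1}$ factor at the origin needs the Frobenius input $u_\lambda=\rho^2h_\lambda(\rho^2)$. This also makes the $\lambda=0$ case genuinely uniform, whereas the paper's endpoint asymptotics formally degenerate there and require the advertised ``minor modifications.'' Finally, you supply an explicit nontriviality argument ($\ker Q=\langle v_1\rangle$ forces $v_\lambda=cv_1$, and comparing spectral parameters gives $c[\lambda(2-\lambda)-1]=0$, hence $c=0$ for $\lambda\neq1$); the paper leaves this step implicit in the nonvanishing of the leading asymptotic constant at $\rho=1$. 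I see no gap in your argument.
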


\section{Absence of unstable eigenvalues for the supersymmetric problem}

\noindent In this section we exclude unstable eigenvalues of Eq.~\eqref{eq:SUSYu}.
Via Proposition \ref{prop:SUSY} this implies the main result
Theorem \ref{thm:main}.

\begin{thm}\label{main_theorem}
The supersymmetric problem Eq.~\eqref{eq:SUSYu} 
does not have unstable eigenvalues.
\end{thm}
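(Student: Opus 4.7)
The plan is to reduce the theorem to a nonvanishing statement for an entire function of $\lambda$, then prove that nonvanishing in two stages.

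The first step is a local analysis at the two singular endpoints. From the Frobenius indices $\{-4,3\}$ at $\rho=0$, admissibility forces any smooth solution to take the form $\tilde u_\lambda(\rho)=\rho^3 h_0(\rho,\lambda)$ with $h_0(\,\cdot\,,\lambda)$ analytic at $0$; from the indices $\{0,1-\lambda\}$ at $\rho=1$, admissibility (for $\Re\lambda\geq 0$ and generic $\lambda$) selects the solution $\tilde u_\lambda(\rho)=h_1(\rho,\lambda)$ with $h_1(\,\cdot\,,\lambda)$ analytic at $1$. I would normalize both local solutions so that their coefficients depend holomorphically on $\lambda$, and form the Wronskian $W(\lambda)$. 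An unstable eigenvalue is precisely a zero of $W$ in $\{\Re\lambda\geq 0\}$, and $W$ is entire in $\lambda$. One should also address the integer values $\lambda=0,1$ by hand, where the Frobenius indices at $\rho=1$ degenerate.

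The second step is an a priori bound confining possible zeros of $W$ to a compact set. Multiplying Eq.~\eqref{eq:SUSYu} by $\rho^2\overline{\tilde u_\lambda(\rho)}$, or a suitably weighted variant, and integrating over $[0,1]$, the prescribed endpoint asymptotics kill the boundary terms and the strict positivity of the SUSY potential $\tilde V(\rho)=20(15-2\rho^2+3\rho^4)/(5+3\rho^2)^2$ gives a coercive quadratic form. Splitting the resulting identity into real and imaginary parts then confines any eigenvalue to an explicit compact set $K\subset\{\Re\lambda\geq 0\}$. In the subregion $\Re\lambda>1$ the self-adjoint reformulation mentioned in the footnote to Eq.~\eqref{eq:v} provides an alternative route via Sturm oscillation theory, which should already cut $K$ down to (roughly) the strip $0\leq\Re\lambda\leq 1$.

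The remaining and genuinely difficult step is to show $W(\lambda)\neq 0$ throughout $K$. I would construct $h_0(\rho,\lambda)$ and $h_1(\rho,\lambda)$ as convergent power series in $\rho$ and $1-\rho$, respectively, whose coefficients depend polynomially on $\lambda$ and satisfy a uniform geometric bound on $K$. Truncating at order $N$ produces an effective approximant $W_N(\lambda)$ together with a rigorous tail estimate of the form $|W(\lambda)-W_N(\lambda)|\leq C r^N$ on $K$. Nonvanishing of $W$ would then reduce to a quantitative lower bound on $|W_N(\lambda)|$, verifiable either by a winding-number argument on $\partial K$ or by a pointwise estimate on a fine grid combined with a Lipschitz bound. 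The main obstacle I anticipate is controlling these series uniformly near residual near-resonances inside $K$; in particular near $\lambda=1$, whose removal via the SUSY construction must be made quantitative in order to avoid small-divisor issues in the normalization of $h_1$.
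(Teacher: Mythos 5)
Your strategy (local Frobenius solutions, an entire Wronskian $W(\lambda)$, an a priori confinement of its zeros to a compact set $K\subset\Hb$, and a quantitative verification of $W\neq 0$ on $K$) is genuinely different from the paper's, and it founders on the confinement step. The energy identity you propose does not yield a compact set. Multiplying Eq.~\eqref{eq:SUSYu} by $\rho^2\overline{\tilde u_\lambda}$ and integrating, the term $2\lambda\rho\tilde u_\lambda'$ produces a cross term $2(\lambda-1)\int\rho^3\tilde u_\lambda'\overline{\tilde u_\lambda}$ whose real part can be integrated by parts but whose imaginary part cannot be expressed through $\int\rho^2|\tilde u_\lambda|^2$; working out the real-part identity one finds the $\lambda$-dependent coefficient $(\Re\lambda-1)^2-(\Im\lambda)^2$ in front of $\int\rho^2|\tilde u_\lambda|^2$, which is large and \emph{negative} when $|\Im\lambda|$ is large, so positivity of $\tilde V$ gives no contradiction there. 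The alternative weighted (self-adjoint) formulation fails for the same reason spelled out in the footnote to Eq.~\eqref{eq:v}: the admissible solution behaves like $(1-\rho)^{\lambda/2-1}$ at $\rho=1$ and does not lie in the relevant weighted $L^2$ space unless $\Re\lambda$ is large, so Sturm theory and symmetry arguments only dispose of $\Re\lambda>1$. What remains after your step 2 is therefore the full unbounded strip $0\leq\Re\lambda\leq 1$, not a compact set, and without compactness the truncation-plus-winding-number verification in your step 3 cannot even be set up. This unbounded non-self-adjoint region is precisely the core difficulty of the problem, not a technical afterthought.

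The paper circumvents this entirely by never leaving the power series at $\rho=0$: an unstable eigenvalue exists iff the series \eqref{power_series_at_0} has radius of convergence greater than $1$, and this is decided by the limit of the coefficient ratios. After reducing the four-term recurrence to the three-term recurrence \eqref{recurrence_for_bn} (using the exact solution $(-3/5)^n$) and invoking Poincar\'e's theorem, the whole problem becomes: show that $r_n=b_{n+1}/b_n\to 1$ rather than $-3/5$ for every $\lambda\in\Hb$. This is done with an explicit rational quasi-solution $\tilde r_n(\lambda)$ and an inductive bound $|r_n/\tilde r_n-1|\leq 1/4$ whose ingredients ($|\varepsilon_n|\leq 1/20$, $|C_n|\leq 3/5$) are rational in $\lambda$ and $n$, polynomially bounded, and hence---by Phragm\'en--Lindel\"of---need only be checked on the imaginary axis, where they reduce to polynomial positivity with manifestly positive coefficients. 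The key advantage is that this single estimate is uniform over all of $\Hb$, including arbitrarily large $|\Im\lambda|$, which is exactly the region your proposal cannot reach. If you want to salvage a Wronskian-based approach, you would first need a genuine resolvent or ODE (e.g.\ WKB-in-$\lambda$) estimate excluding eigenvalues for large $|\Im\lambda|$ in the strip; that is a substantial missing ingredient, not a routine one.
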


The Frobenius indices of~\eqref{eq:SUSYu} at $0$ are $-4$ and 3, hence 
the solution analytic at 0 has the power series representation
\begin{equation}\label{power_series_at_0}
\sum_{n=0}^{\infty}a_n(\lambda)\rho^{2n+3},\quad a_0\neq0. 
\end{equation}
Note that $\lambda$ is an eigenvalue of~\eqref{eq:SUSYu} 
if and only if the radius of convergence of~\eqref{power_series_at_0} is 
greater than 1. Therefore, our aim is to prove that for any 
$\lambda$ in the closed right half-plane (which from now on we denote by 
$\Hb$),~\eqref{power_series_at_0} cannot be analytically extended 
through $\rho=1$.

By substituting~\eqref{power_series_at_0} into~\eqref{eq:SUSYu} we obtain a four term 
recurrence relation (with the initial condition $a_0=1$ and $a_n=0$ for $n<0$)
\begin{equation}\label{recurrence_for_an} 
p_3(n)a_{n+3}+p_2(n)a_{n+2}+p_1(n)a_{n+1}+p_0(n)a_n=0,
\end{equation}
where
\begin{align*}
p_3(n)&=-100n^2-950n-1950,\\
p_2(n)&=-20n^2+(100\lambda-150)n+25\lambda^2+375\lambda-370,\\
p_1(n)&=84n^2+(120\lambda+462)n+30\lambda^2+330\lambda+630,\\
p_0(n)&=36n^2+(36\lambda+126)n+9\lambda^2+63\lambda+90.
\end{align*}
One can check that  $a_n=(-3/5)^n$ is an exact solution to~\eqref{recurrence_for_an}, hence the
order of the recurrence~\eqref{recurrence_for_an} can be reduced by one through the substitution 
\begin{equation}\label{subs}
b_n=a_{n+1}+\tfrac{3}{5}\, a_n.
\end{equation}
This yields a three term recurrence relation for $b_n$
\begin{equation}\label{rec2}
q_2(n)b_{n+2}+q_1(n)b_{n+1}+q_0(n)b_n=0,
\end{equation}
where
\begin{align*} 
q_2(n)&=p_3(n),\\
q_1(n)&=p_2(n)-\tfrac{3}{5}\, p_3(n),\\
q_0(n)&=p_1(n)-\tfrac{3}{5}\, p_2(n)+\tfrac{9}{25}\, p_3(n).
\end{align*}
After substituting for $p_i(n)$ in the last three relations, dividing all of them by 5 and using the $q_i$ 
notation for the new coefficients, we get
\begin{align*}
q_2(n)&=-20n^2-190n-390,\\
q_1(n)&=8n^2+(20\lambda+84)n+5\lambda^2+75\lambda+160,\\
q_0(n)&=12n^2+(12\lambda+42)n+3\lambda^2+21\lambda+30.
\end{align*}
By letting $A_n=q_1(n)/q_2(n)$ and $B_n=q_0(n)/q_2(n)$,~\eqref{rec2} becomes equivalent to
\begin{equation}\label{recurrence_for_bn}
b_{n+2}+A_nb_{n+1}+B_nb_n=0,
\end{equation}
with the initial condition $b_{-2}=0$ and $b_{-1}=1$.

\begin{lem}
\label{limit_of_rn} Given $\lambda$ in the complex plane, either
\begin{equation}\label{limit1_for_rn}
\lim_{n\rightarrow \infty} \frac{b_{n+1}(\lambda)}{b_n(\lambda)} = 1,
\end{equation}
or
\begin{equation}\label{limit2_for_rn}
\lim_{n\rightarrow \infty} \frac{b_{n+1}(\lambda)}{b_n(\lambda)} = -\frac{3}{5}.
\end{equation}
\end{lem}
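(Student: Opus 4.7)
The plan is to recognize this as a direct application of Poincaré's theorem on linear recurrences with asymptotically constant coefficients. The first step is to compute the limits of the coefficients of the recurrence \eqref{recurrence_for_bn}. Since $q_2(n),q_1(n),q_0(n)$ are quadratic polynomials in $n$ (with leading coefficients $-20$, $8$, $12$ independent of $\lambda$), one immediately obtains
\[
A_n=\frac{q_1(n)}{q_2(n)}\longrightarrow -\tfrac{2}{5},\qquad B_n=\frac{q_0(n)}{q_2(n)}\longrightarrow -\tfrac{3}{5},
\]
with the $\lambda$-dependent terms contributing only $O(1/n)$ corrections. The associated characteristic polynomial at infinity is
\[
t^2-\tfrac{2}{5}t-\tfrac{3}{5}=(t-1)\bigl(t+\tfrac{3}{5}\bigr),
\]
whose roots $1$ and $-3/5$ have distinct moduli.

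Next I would invoke Poincaré's theorem, which asserts that when $A_n\to A$, $B_n\to B$ and the two roots of $t^2+At+B=0$ have distinct moduli, every nontrivial solution of $b_{n+2}+A_n b_{n+1}+B_n b_n=0$ satisfies $\lim_{n\to\infty} b_{n+1}/b_n\in\{t_1,t_2\}$. Since our sequence starts with $b_{-2}=0,\,b_{-1}=1$, it is nontrivial, and the conclusion is exactly \eqref{limit1_for_rn}--\eqref{limit2_for_rn}.

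The main technical obstacle — and the only point that requires care beyond quoting Poincaré — is the well-definedness of the ratio: one must rule out $b_n=0$ for infinitely many $n$ so that $b_{n+1}/b_n$ makes sense eventually. If $b_n=b_{n+1}=0$ for some $n$, the forward recurrence forces $b_m=0$ for all $m\geq n$, while the backward recurrence (solvable since the leading polynomial $q_2(n)=-20n^2-190n-390$ has no nonnegative integer root) forces $b_m=0$ for all $m$, contradicting $b_{-1}=1$. If only $b_n=0$ but $b_{n+1}\neq 0$ occurs infinitely often, one works with the ratio along the nonvanishing subsequence; the Poincaré/Perron argument is conveniently formulated in terms of $s_n=b_{n+1}/b_n$ satisfying $s_{n+1}=-A_n-B_n/s_n$, whose fixed points are precisely $1$ and $-3/5$, and the hyperbolic (distinct-moduli) structure forces attraction to one of them.

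Finally I would note the parameter uniformity: the limits of $A_n,B_n$ are independent of $\lambda$, so the dichotomy holds for every $\lambda\in\mathbb{C}$, exactly as stated. The subsequent sections will then be able to use this lemma to separate, for $\lambda\in\Hb$, the ``generic'' solution with ratio $1$ (which blows up at $\rho=1$ and prevents analytic continuation) from the ``exceptional'' solution with ratio $-3/5$ (which corresponds to the geometric series already factored out by the substitution \eqref{subs}).
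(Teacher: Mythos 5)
Your argument is essentially identical to the paper's: both pass to the limiting characteristic equation $t^2-\tfrac{2}{5}t-\tfrac{3}{5}=0$, observe that its roots $1$ and $-\tfrac{3}{5}$ have distinct moduli, invoke Poincar\'e's theorem, and dispose of the degenerate eventually-zero alternative by running the recurrence backwards to contradict $b_{-1}=1$. One small slip worth fixing: solving \eqref{rec2} \emph{backwards} for $b_n$ requires $q_0(n)\neq 0$ (the coefficient $q_2(n)\neq 0$ you cite is what makes the \emph{forward} recursion well-defined), though the paper leaves this point equally implicit.
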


\begin{proof} 
Since $\lim_{n\rightarrow \infty} A_n(\lambda)=-2/5$ and $\lim_{n\rightarrow \infty} 
B_n(\lambda)=-3/5,$ the characteristic equation associated to \eqref{recurrence_for_bn} is
\begin{equation}\label{char_eq}
t^2-\tfrac{2}{5}t-\tfrac{3}{5}=0.
\end{equation}
As the solutions to~\eqref{char_eq} (1 and $-3/5$) have distinct moduli, 
by a theorem of Poincar\'{e} (see, for example, \cite{Elaydi05}, p. 343, or 
\cite{Buslaev05}), either $b_n$ is zero eventually in $n$, 
or $\lim_{n\rightarrow \infty} b_{n+1}(\lambda)/b_n(\lambda)$ exists and it is 
equal to either 1 or $-3/5$. Now, for a fixed $\lambda$, $b_n$ cannot be zero eventually in $n$, 
since by backward induction from~\eqref{recurrence_for_bn} one would get $b_{-1}=0$, 
hence the claim follows.
\end{proof}

Note that in order to prove Theorem \ref{main_theorem}, it suffices to show that~\eqref{limit1_for_rn} 
holds for all $\lambda$ in $\Hb$, for that implies non-analyticity of 
~\eqref{power_series_at_0} at 1. Indeed, defining $f_\lambda$ by~\eqref{power_series_at_0} and 
$g_\lambda$ by  $g_\lambda(\rho)=a_0(\lambda)\rho+\sum_{n=0}^{\infty}b_n(\lambda)\rho^{2n+3}$, 
one easily checks that 
\begin{equation}\label{f_and_g}
f_\lambda(\rho)=\frac{5\rho^2}{3\rho^2+5}g_\lambda(\rho).
\end{equation}
So if~\eqref{limit1_for_rn} holds and therefore $g_\lambda$ is singular at $1$, then, by~\eqref{f_and_g}, 
so is $f_\lambda$. \par
Let $r_n=b_{n+1}/b_n$. Then from~\eqref{recurrence_for_bn} we obtain
\begin{equation}\label{recurrence_for_rn} 
r_{n+1}=-A_n-\frac{B_n}{r_n},
\end{equation}
where 
\begin{equation}\label{r_-1}
r_{-1}=\frac{b_0}{b_{-1}}= -A_{-2}(\lambda)= \frac{1}{18}\lambda^2+\frac{7}{18}\lambda+\frac{4}{15}.
\end{equation}
The idea is to find a ``simple'', provably close approximation to $r_n$ in $\Hb$, that 
converges to 1 for any fixed $\lambda$, which would then imply~\eqref{limit1_for_rn}. 

We use the quasi-solution approach, initially developed for ordinary differential equations 
in \cite{CostinHuangSchlag12, CostinHuangTanveer14}, which we here, in a sense, extend to difference 
equations of type~\eqref{recurrence_for_rn}. Namely, as a quasi-solution to~\eqref{recurrence_for_rn}
we define
\begin{equation}\label{quasi_solution}
\tilde{r}_n(\lambda)=\frac{\lambda^2}{4n^2+31n+43}+\frac{\lambda}{n+4}+\frac{n+2}{n+4}.
\end{equation}
Of course, the choice is not arbitrary, and in \S\ref{QuasiExplanation} 
we describe in some detail how to obtain such an approximate solution. 
The quasi-solution $\tilde{r}_n$ turns out to be a good approximation to $r_n$ in the whole of $\Hb$.

\begin{lem}
\label{analyticity}
$r_1$ and $(\tilde{r}_n)^{-1}$ for $n\geq1$, are analytic in $\Hb$.
\end{lem}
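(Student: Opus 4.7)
My plan is to treat the two claims of the lemma separately, as they reduce to different elementary observations.

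For $(\tilde r_n)^{-1}$, I first note that $\tilde r_n$ is a polynomial in $\lambda$ and hence entire, so the task is only to exclude zeros of $\tilde r_n$ in $\Hb$. Clearing denominators in \eqref{quasi_solution}, $\tilde r_n(\lambda)$ equals, up to a positive rational prefactor, the quadratic
\[ (n+4)\,\lambda^2+(4n^2+31n+43)\,\lambda+(n+2)(4n^2+31n+43). \]
For $n\geq 1$ all three coefficients are strictly positive, so by Vieta's formulas the sum of its two roots is a negative real and their product is a positive real. If the roots are real they must both be negative; if they form a complex conjugate pair, their common real part equals $-(4n^2+31n+43)/(2(n+4))<0$. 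In either case both roots lie strictly in the open left half-plane, so $\tilde r_n$ has no zeros on $\Hb$ and $(\tilde r_n)^{-1}$ is analytic there.

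For $r_1$, I would exploit the identification $r_k=b_{k+1}/b_k$. The recurrence \eqref{recurrence_for_bn} together with the initial data $b_{-2}=0$ and $b_{-1}=1$ determines $b_0,b_1,b_2$ as explicit polynomials in $\lambda$ with rational coefficients (using the formulas for $A_n,B_n$ at $n=-2,-1,0$). Thus $r_1=b_2/b_1$ is a rational function of $\lambda$, and the analyticity claim reduces to showing $b_1(\lambda)\neq 0$ for $\lambda\in\Hb$. A direct substitution produces $b_1$ as a degree-$4$ polynomial in $\lambda$ with strictly positive rational coefficients; to conclude that all its roots lie in the open left half-plane I would then apply the Routh-Hurwitz criterion, which for a quartic amounts to checking positivity of four Hurwitz determinants built from the coefficients---a finite list of explicit rational inequalities.

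The main difficulty is not conceptual: it is the bookkeeping required to extract $b_1$ cleanly from the recurrence and then to evaluate the four Hurwitz determinants without algebraic slips. No step uses anything beyond elementary algebra, and the role of the lemma is merely to establish the routine analyticity needed as input for the subsequent comparison between $r_n$ and the quasi-solution $\tilde r_n$.
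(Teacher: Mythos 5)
Your proposal is correct and takes essentially the same route as the paper: both reduce the lemma to Hurwitz-stability of the denominator of $r_1$ (a quartic in $\lambda$ with positive coefficients, obtainable as $b_1$ from the recurrence) and of the quadratic obtained by clearing denominators in $\tilde r_n$, and then verify stability by an explicit algebraic criterion. The only cosmetic difference is that you use Vieta's formulas for the quadratic and the classical Routh--Hurwitz determinants for the quartic, whereas the paper invokes Wall's continued-fraction reformulation of the same criterion.
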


\begin{proof}
From~\eqref{recurrence_for_rn} and~\eqref{r_-1} we compute
\small\begin{equation*}
r_1(\lambda)= \frac{1}{78}\frac{25\lambda^6+825\lambda^5+10945\lambda^4+69735\lambda^3+207694
\lambda^2+260856\lambda+96192}{25\lambda^4+450\lambda^3+2735\lambda^2+5070\lambda+2016}.
\end{equation*}\normalsize
The denominator of $r_1$ and the polynomials $\tilde{r}_n(\lambda)$ for $n\geq1$ 
are Hurwitz-stable i.e., all of their zeros are in the (open) left half-plane, which can be straightforwardly 
checked by, say, the Routh-Hurwitz criterion or its reformulation by Wall (see \cite{Wall45} or 
\S\ref{Hurwitz_Stability})\footnote{There are, of course, elementary ways of proving this claim. 
However, the suggested approach is more general.}. The conclusion follows.
\end{proof}

Now, let
\begin{equation}\label{delta_definition} 
\delta_n=\frac{r_n}{\tilde{r}_n}-1.
\end{equation}
Substitution of~\eqref{delta_definition} into~\eqref{recurrence_for_rn} 
leads to the following recurrence relation for $\delta_n$,
\begin{equation}\label{delta_recurrence} 
\delta_{n+1}=\varepsilon_n+C_n\frac{\delta_n}{1+\delta_n},
\end{equation}
where
\begin{equation}\label{epsilon_and_C} 
\varepsilon_n=\frac{-A_n\tilde{r}_n-B_n}{\tilde{r}_n\tilde{r}_{n+1}}-1 \quad \text{and} 
\quad C_n=\frac{B_n}{\tilde{r}_n\tilde{r}_{n+1}}.
\end{equation}

\begin{lem}
\label{estimates}
The following estimates hold in $\Hb$,
\begin{gather}
|\delta_1|\leq\frac{1}{4}, \label{d1_estimate} \\
|\varepsilon_n|\leq\frac{1}{20},\quad n\geq1, \label{en_estimate} \\
|C_n|\leq\frac{3}{5}, \quad n\geq1. \label{cn_estimate} 
\end{gather}
\end{lem}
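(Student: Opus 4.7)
The plan is to prove all three estimates by the same template. Each of the quantities $\delta_1(\lambda)$, $\varepsilon_n(\lambda)$, $C_n(\lambda)$ is a rational function of $\lambda$ whose poles lie in the open left half-plane: for $\delta_1$ this is precisely Lemma \ref{analyticity} applied to $\tilde r_1$ and to the denominator of $r_1$, while for $\varepsilon_n$ and $C_n$ it follows from the definitions \eqref{epsilon_and_C} together with the Hurwitz stability of $\tilde r_n$, $\tilde r_{n+1}$ for $n \geq 1$. A comparison of the degrees in $\lambda$ of numerator and denominator also shows that each function has a finite limit as $|\lambda| \to \infty$: $\delta_1$ and $C_n$ vanish at infinity, and $\varepsilon_n$ approaches an explicitly computable constant. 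Hence each is holomorphic on an open neighbourhood of the Riemann-sphere closure $\Hb \cup \{\infty\}$, and by the maximum modulus principle the supremum of $|\cdot|$ on $\Hb$ is attained on $i\R \cup \{\infty\}$. It therefore suffices to verify each of \eqref{d1_estimate}, \eqref{en_estimate}, \eqref{cn_estimate} for $\lambda = it$, $t \in \R$, which in each case reduces to a single-variable polynomial inequality.

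For \eqref{d1_estimate}, I would combine the explicit formula for $r_1$ given in the proof of Lemma \ref{analyticity} with $\tilde r_1 = \tfrac{1}{78}\lambda^2 + \tfrac{1}{5}\lambda + \tfrac{3}{5}$ to write $\delta_1(\lambda) = N(\lambda)/D(\lambda)$ with explicit polynomials $N$ and $D$. The estimate on $i\R$ then becomes
\begin{equation*}
|D(it)|^2 - 16\,|N(it)|^2 \;\geq\; 0 \qquad (t \in \R),
\end{equation*}
which is a polynomial in $s = t^2 \geq 0$ with real coefficients. I would verify non-negativity either by displaying a representation with manifestly positive coefficients in $s$ or, if needed, by a short sum-of-squares certificate; a comfortable margin is expected, since $N$ vanishes at $\lambda = 0$ and $N/D \to 0$ at infinity.

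Estimates \eqref{en_estimate} and \eqref{cn_estimate} follow the same scheme with the auxiliary parameter $n \geq 1$. Introducing the recurrence residue
\begin{equation*}
E_n(\lambda) \;:=\; \tilde r_n(\lambda)\,\tilde r_{n+1}(\lambda) + A_n(\lambda)\,\tilde r_n(\lambda) + B_n(\lambda),
\end{equation*}
one has $\varepsilon_n = -E_n/(\tilde r_n \tilde r_{n+1})$, where $E_n$ is a polynomial of degree at most four in $\lambda$ whose coefficients are rational functions of $n$; the ansatz \eqref{quasi_solution} has been engineered precisely so that these coefficients are small in $n$. Substituting $\lambda = it$, \eqref{en_estimate} and \eqref{cn_estimate} become the real polynomial inequalities
\begin{equation*}
400\,|E_n(it)|^2 \;\leq\; |\tilde r_n(it)|^2\,|\tilde r_{n+1}(it)|^2,
\qquad
25\,|B_n(it)|^2 \;\leq\; 9\,|\tilde r_n(it)|^2\,|\tilde r_{n+1}(it)|^2,
\end{equation*}
each a polynomial in $s = t^2 \geq 0$ whose coefficients are rational functions of $n$. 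The strategy is to show that every coefficient of $s^k$ is eventually non-negative as a function of $n$ by extracting its leading behaviour, which pins down a threshold $n_0 \geq 1$ beyond which both inequalities hold term by term, and then to check the finitely many remaining cases $1 \leq n < n_0$ by direct computation.

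The main obstacle is the bookkeeping in the last step: one must certify a polynomial inequality jointly in $s \geq 0$ and $n \geq 1$ whose coefficients, although of modest degree, are unwieldy. The analytic content is entirely carried by the Phragmén–Lindelöf reduction, and no further conceptual idea is needed; success of the plan relies on the design of $\tilde r_n$, which makes $E_n$ substantially smaller than $\tilde r_n \tilde r_{n+1}$ uniformly in $n \geq 1$, leaving enough slack for the tight constants $1/20$ and $3/5$ to hold.
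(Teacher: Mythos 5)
Your proposal is correct and follows essentially the same route as the paper: analyticity in $\Hb$ via Hurwitz stability, reduction to the imaginary axis by a maximum principle (the paper invokes Phragm\'en--Lindel\"of for polynomially bounded analytic functions, you use the maximum modulus principle on the compactified half-plane, which is equivalent here for rational functions with a finite limit at infinity), followed by a positivity check of a polynomial in $t^2$ and $n$. The only substantive difference is the final step: the paper observes that the relevant combinations (e.g.\ $\tfrac{9}{25}Q_2-Q_1$ for $C_{n+1}$ with $n\geq 0$) already have manifestly positive coefficients, so no threshold $n_0$ and no residual finite check of small $n$ are needed.
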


\begin{proof} 
\par The method of proof is the same for all three quantities, so we illustrate it 
only on $C_n$. \par Lemma \ref{analyticity} and~\eqref{epsilon_and_C} 
imply that $C_n$ is analytic in $\Hb$. 
Also, being a rational function, $C_n$ is evidently polynomially 
bounded in $\Hb$. Hence, according to the 
Phragm\'{e}n-Lindel\"{o}f principle\footnote{We use the sectorial 
formulation of this principle, see, for example, \cite{Titchmarsh58}, p. 177.}, 
it suffices to prove that~\eqref{cn_estimate} holds on the imaginary line. 
To that end, we first bring $C_{n+1}(\lambda)$ to the form of the ratio of two 
polynomials $P_1(n,\lambda)$ and $P_2(n,\lambda)$\footnote{For all three quantities, 
straightforward calculations would lead to the form that we used. However, to prevent 
possible ambiguity, in \S\ref{polynomials} we give the explicit form (as a ratio of 
polynomials) for all three quantities.}. Then, for $t$ real, $|C_{n+1}(it)|^2$ is equal 
to the quotient of two polynomials, $Q_1(n,t^2)=|P_1(n,it)|^2$ 
and $Q_2(n,t^2)=|P_2(n,it)|^2$. In order to show that $|C_{n+1}(it)|\leq3/5$, for all 
real $t$ and $n\geq0$, all we need is to show 
that $|C_{n+1}(it)|^2=Q_1(n,t^2)/Q_2(n,t^2)\leq9/25$, or 
equivalently $9/25\cdot Q_2-Q_1\geq0$. Using elementary calculations, 
we see that $9/25\cdot Q_2-Q_1$ has manifestly positive coefficients, and 
the variable $t$ appears with even powers only. Thus,~\eqref{cn_estimate} holds on 
the whole imaginary line, and the result follows.
\end{proof}

\begin{proof}[Proof of the Theorem \ref{main_theorem}]
From~\eqref{delta_recurrence} and Lemma \ref{estimates}, a simple inductive argument implies that 
\begin{equation}\label{delta_estimate}
|\delta_n|\leq\frac{1}{4}, \quad \text{for all } n\geq1 \text{, and } \lambda\in\Hb.
\end{equation}
Since for any fixed $\lambda$, $\lim_{n\rightarrow \infty} \tilde{r}_n(\lambda) = 1$,
~\eqref{delta_definition} and~\eqref{delta_estimate} exclude the possibility of~\eqref{limit2_for_rn}. 
Hence,~\eqref{limit1_for_rn} holds in $\Hb$, and the claim follows.
\end{proof}

\section{Appendix}

\subsection{Description of  how to obtain a quasi-solution}\label{QuasiExplanation}
First, the minimax polynomial approximation\footnote{The minimax polynomial approximation of degree 
$n$ to a continuous function $f$ on a given finite interval $[a,b]$ is defined to be the best approximation, 
among the polynomials of degree $n$, to $f$ in the uniform sense on $[a,b]$. For the proof of existence 
and uniqueness of this approximation and an algorithm to obtain it, see \cite{Phillips03}, \S 2.4.} of 
degree two to $r_n$ over an interval $[0,10]$ is found, where $n$ ranges from 0 to 20. Then, appropriate 
rational functions in $n$ are fitted to the coefficients of the approximation polynomials. \par
We should point out that interval of polynomial approximation and the range of values of $n$ can vary, 
and the ones from the description are just our choice. We choose quadratic polynomial approximations 
due to the fact that $r_n$ is a ratio of two polynomials whose degrees differ by two.

\subsection{Wall's criterion for Hurwitz-stability}
\label{Hurwitz_Stability}
Let $P(z)=z^n+a_1z^{n-1}+\cdots+a_n$ be a polynomial with real coefficients, and let 
$Q(z)=a_1z^{n-1}+a_3z^{n-3}+\cdots$ be the polynomial that contains exactly those terms of $P(z)$ 
that have odd-indexed coefficient. Then all the zeros of $P(z)$ have negative real parts if and only if the 
quotient $Q(z)/P(z)$ can be represented in a finite continued fraction form
$$1/(a_1+1/(a_2+1/(a_3+ \ldots +1/a_n)\dots),$$ where $a_1=c_1z+1$, $a_2=c_2z,$ $\dots,$ $a_n=c_nz$, 
and the coefficients $c_1$, $c_2$, $\dots$, $c_n$ are all positive.

In our case, for the denominator of $r_1$, the 
coefficients $c_i$ are $c_1=\mbox{\small$1/18$}$, $c_2=\mbox{\small$135/736$}$, 
$c_3=\mbox{\small$33856/64863$}$ and $c_4=\mbox{\small$36035/15456$}$, 
and for $\tilde{r}_n$, $c_1=\mbox{\small$(n+4)/(4n^2+31n+43)$}$, and $c_2=\mbox{\small$1/(n+2)$}$. 
\normalsize

\subsection{Detailed expressions for $C_n$, $\varepsilon_n$ and $\delta_1$}
\label{polynomials}
We give details of these quantities in order to fully clarify the notations. 
We have 
\[ C_{n+1}=P_1(n,\lambda)/P_2(n,\lambda), \]
where 
\begin{align*}
P_1(n,\lambda)=&-3(n+5)(n+6)(4n^2+39n+78)(4n^2+47n+121) \\
&\times [\lambda^2+(4n+11)\lambda+4n^2+22n+28] 
\end{align*}
and
\begin{align*} 
P_2(n,\lambda)=&10(2n^2+23n+60)[(n+5)\lambda^2+(4n^2+39n+78)(\lambda+n+3)] \\
&\times [(n+6)\lambda^2+(4n^2+47n+121)(\lambda+n+4)], 
\end{align*}
respectively. Furthermore, $\varepsilon_{n+1}=P_3(n,\lambda)/P_2(n,\lambda)$, where 
\begin{align*}
P_3(n,\lambda)=&5(n+1)(n+5)(n+6)\lambda^4 \\
&-5(8n^4+158n^3+1095n^2+3171n+3162)\lambda^3 \\
&-(112n^5+2364n^4+17243n^3
+48805n^2+33244n-36060)\lambda^2 \\
&-4(4n^2+39n+78)(4n^2+47n+121) \\
&\times [(3n^2+5n-3)\lambda-4n^2-3n+36]. \end{align*} 
Finally, \begin{equation*}
\delta_1=\frac{-5\lambda^2(15\lambda^3-20\lambda^2-939\lambda+1412)-36(1093\lambda-256)}
{(5\lambda^2+78\lambda+234)(25\lambda^4+450\lambda^3+2735\lambda^2+5070\lambda+2016)}.
\end{equation*}

\bibliography{references}

\begin{thebibliography}{10}

\bibitem{A79}
Alfred Actor.
\newblock Classical solutions of $\mathrm{SU}(2)$ {Y}ang-{M}ills theories.
\newblock {\em Rev. Mod. Phys.}, 51:461--525, Jul 1979.

\bibitem{BC05}
Piotr Bizo\ifmmode~\acute{n}\else \'{n}\fi{} and Tadeusz Chmaj.
\newblock Convergence towards a self-similar solution for a nonlinear wave
  equation: A case study.
\newblock {\em Phys. Rev. D}, 72(4):045013, Aug 2005.

\bibitem{BOS04}
P.~Bizo{\'n}, Yu.~N. Ovchinnikov, and I.~M. Sigal.
\newblock Collapse of an instanton.
\newblock {\em Nonlinearity}, 17(4):1179--1191, 2004.

\bibitem{BizTab01}
P.~Bizo{\'n} and Z.~Tabor.
\newblock On blowup of {Y}ang-{M}ills fields.
\newblock {\em Phys. Rev. D (3)}, 64(12):121701, 4, 2001.

\bibitem{B02}
Piotr Bizo{\'n}.
\newblock Formation of singularities in {Y}ang-{M}ills equations.
\newblock {\em Acta Phys. Polon. B}, 33(7):1893--1922, 2002.

\bibitem{Buslaev05}
V.~I. Buslaev and S.~F. Buslaeva.
\newblock Poincar\'e's theorem on difference equations.
\newblock {\em Mat. Zametki}, 78(6):943--947, 2005.

\bibitem{CST98}
Thierry Cazenave, Jalal Shatah, and A.~Shadi Tahvildar-Zadeh.
\newblock Harmonic maps of the hyperbolic space and development of
  singularities in wave maps and {Y}ang-{M}ills fields.
\newblock {\em Ann. Inst. H. Poincar\'e Phys. Th\'eor.}, 68(3):315--349, 1998.

\bibitem{CostinHuangSchlag12}
O.~Costin, M.~Huang, and W.~Schlag.
\newblock On the spectral properties of {$L_\pm$} in three dimensions.
\newblock {\em Nonlinearity}, 25(1):125--164, 2012.

\bibitem{CostinHuangTanveer14}
O.~Costin, M.~Huang, and S.~Tanveer.
\newblock Proof of the {D}ubrovin conjecture and analysis of the tritronqu\'ee
  solutions of {$P_I$}.
\newblock {\em Duke Math. J.}, 163(4):665--704, 2014.

\bibitem{CosDonXia14}
Ovidiu Costin, Roland Donninger, and Xiaoyue Xia.
\newblock A proof for the mode stability of a self-similar wave map.
\newblock {\em Preprint arXiv:1411.2947}, 2014.

\bibitem{CKM08}
Rapha{\"e}l C{\^o}te, Carlos~E. Kenig, and Frank Merle.
\newblock Scattering below critical energy for the radial 4{D} {Y}ang-{M}ills
  equation and for the 2{D} corotational wave map system.
\newblock {\em Comm. Math. Phys.}, 284(1):203--225, 2008.

\bibitem{D11}
Roland Donninger.
\newblock On stable self--similar blow up for equivariant wave maps.
\newblock {\em Comm.~Pure Appl.~Math.}, 64(8):1029--1164, 2011.

\bibitem{Don14}
Roland Donninger.
\newblock Stable self-similar blowup in energy supercritical {Y}ang-{M}ills
  theory.
\newblock {\em Math. Z.}, 278(3-4):1005--1032, 2014.

\bibitem{DonSch12}
Roland Donninger and Birgit Sch{\"o}rkhuber.
\newblock Stable self-similar blow up for energy subcritical wave equations.
\newblock {\em Dyn. Partial Differ. Equ.}, 9(1):63--87, 2012.

\bibitem{DonSch14}
Roland Donninger and Birgit Sch{\"o}rkhuber.
\newblock Stable blow up dynamics for energy supercritical wave equations.
\newblock {\em Trans. Amer. Math. Soc.}, 366(4):2167--2189, 2014.

\bibitem{DSA11}
Roland Donninger, Birgit Sch\"orkhuber, and Peter Aichelburg.
\newblock On stable self-similar blow up for equivariant wave maps: The
  linearized problem.
\newblock {\em Ann.~Henri Poincar\'e}, 13:103--144, 2012.
\newblock 10.1007/s00023-011-0125-0.

\bibitem{D82}
Oana Dumitra{\c{s}}cu.
\newblock Equivariant solutions of the {Y}ang-{M}ills equations.
\newblock {\em Stud. Cerc. Mat.}, 34(4):329--333, 1982.

\bibitem{EM82a}
Douglas~M. Eardley and Vincent Moncrief.
\newblock The global existence of {Y}ang-{M}ills-{H}iggs fields in
  {$4$}-dimensional {M}inkowski space. {I}. {L}ocal existence and smoothness
  properties.
\newblock {\em Comm. Math. Phys.}, 83(2):171--191, 1982.

\bibitem{EM82b}
Douglas~M. Eardley and Vincent Moncrief.
\newblock The global existence of {Y}ang-{M}ills-{H}iggs fields in
  {$4$}-dimensional {M}inkowski space. {II}. {C}ompletion of proof.
\newblock {\em Comm. Math. Phys.}, 83(2):193--212, 1982.

\bibitem{Elaydi05}
S.~Elaydi.
\newblock {\em An introduction to difference equations}.
\newblock Undergraduate Texts in Mathematics. Springer, New York, third
  edition, 2005.

\bibitem{GM07}
Carsten Gundlach and Jos\'e~M. Mart\'in-Garc\'ia.
\newblock Critical phenomena in gravitational collapse.
\newblock {\em Living Reviews in Relativity}, 10(5), 2007.

\bibitem{KM95}
S.~Klainerman and M.~Machedon.
\newblock Finite energy solutions of the {Y}ang-{M}ills equations in {$\bold
  R^{3+1}$}.
\newblock {\em Ann. of Math. (2)}, 142(1):39--119, 1995.

\bibitem{KT99}
Sergiu Klainerman and Daniel Tataru.
\newblock On the optimal local regularity for {Y}ang-{M}ills equations in
  {${\bf R}^{4+1}$}.
\newblock {\em J. Amer. Math. Soc.}, 12(1):93--116, 1999.

\bibitem{KST09}
J.~Krieger, W.~Schlag, and D.~Tataru.
\newblock Renormalization and blow up for the critical {Y}ang-{M}ills problem.
\newblock {\em Adv. Math.}, 221(5):1445--1521, 2009.

\bibitem{KS05}
Joachim Krieger and Jacob Sterbenz.
\newblock Global regularity for the {Y}ang-{M}ills equations on high
  dimensional {M}inkowski space.
\newblock {\em Mem. Amer. Math. Soc.}, 223(1047):vi+99, 2013.

\bibitem{Phillips03}
G.~M. Phillips.
\newblock {\em Interpolation and approximation by polynomials}.
\newblock CMS Books in Mathematics/Ouvrages de Math\'ematiques de la SMC, 14.
  Springer-Verlag, New York, 2003.

\bibitem{RR09}
Pierre Rapha{\"e}l and Igor Rodnianski.
\newblock Stable blow up dynamics for the critical co-rotational wave maps and
  equivariant {Y}ang-{M}ills problems.
\newblock {\em Publ. Math. Inst. Hautes \'Etudes Sci.}, pages 1--122, 2012.

\bibitem{S10}
Atanas Stefanov.
\newblock Global regularity for {Y}ang-{M}ills fields in {${\bf R}^{1+5}$}.
\newblock {\em J. Hyperbolic Differ. Equ.}, 7(3):433--470, 2010.

\bibitem{S07}
Jacob Sterbenz.
\newblock Global regularity and scattering for general non-linear wave
  equations. {II}. {$(4+1)$} dimensional {Y}ang-{M}ills equations in the
  {L}orentz gauge.
\newblock {\em Amer. J. Math.}, 129(3):611--664, 2007.

\bibitem{T05}
Gerardus 't~Hooft, editor.
\newblock {\em 50 years of {Y}ang-{M}ills theory}.
\newblock World Scientific Publishing Co. Pte. Ltd., Hackensack, NJ, 2005.

\bibitem{Titchmarsh58}
E.~C. Titchmarsh.
\newblock {\em The theory of functions}.
\newblock Oxford University Press, Oxford, 1958.
\newblock Reprint of the second (1939) edition.

\bibitem{Wall45}
H.~S. Wall.
\newblock Polynomials whose zeros have negative real parts.
\newblock {\em Amer. Math. Monthly}, 52:308--322, 1945.

\end{thebibliography}
\bibliographystyle{plain}

\end{document}